\DeclareMathAlphabet{\mathcal}{OMS}{cmsy}{m}{n}
\newcommand{\ignore}[1]{}
\newcommand{\mat}[1]{\ensuremath{\mathsf{#1}}}
\newcommand{\eq}[0]{h}
\newcommand{\ineq}[0]{g}
\newcommand{\me}[0]{m_{h}}
\newcommand{\mi}[0]{m_{g}}
\newcommand{\lag}[0]{\bm{\lambda}}
\newcommand{\lage}[0]{\lambda}
\newcommand{\lagi}[0]{\mu}
\newcommand{\Tr}[0]{^{\mathrm{T}}}
\newcommand{\eg}[0]{{e.g.\@}\xspace}
\newcommand{\ie}[0]{{i.e.\@}\xspace}
\DeclareMathOperator*{\argmin}{argmin}
\newtheorem{thrm}{Theorem}
\newtheorem{prop}{Proposition}
\journalname{Optimization and Engineering}
\begin{document}

\title{Error-tolerant Multisecant Method for Nonlinearly Constrained Optimization\thanks{This work was supported by the Air Force Office of
    Scientific Research Award FA9550-15-1-0242 under Dr. Jean-Luc Cambier.}  }

\titlerunning{Multisecant Accelerated Descent}        

\author{Jason E Hicken \and Pengfei Meng \and Alp Dener}


\institute{ 
  Department of Mechanical, Aerospace, and Nuclear Engineering, \\
  Rensselaer Polytechnic Institute\\
  Troy, New York, 12180, United States\\
  Tel.: 1-518-276-4893\\
  \email{hickej2@rpi.edu}\\
  www.optimaldesignlab.com
}

\date{Received: date / Accepted: date}

\maketitle

\begin{abstract}

We present a derivative-based algorithm for nonlinearly constrained optimization problems that is tolerant of inaccuracies in the data.   The algorithm solves a semi-smooth set of nonlinear equations that are equivalent to the first-order optimality conditions, and it is matrix-free in the sense that it does not require the explicit Lagrangian Hessian or Jacobian of the constraints.  The solution method is quasi-Newton, but rather than approximating only the Hessian or constraint Jacobian, the Jacobian of the entire nonlinear set of equations is approximated using a multisecant method.  We show how preconditioning can be incorporated into the multisecant update in order to improve the performance of the method.  For nonconvex problems, we propose a simple modification of the secant conditions to regularize the Hessian.  Numerical experiments suggest that the algorithm is a promising alternative to conventional gradient-based algorithms, particularly when errors are present in the data.

  \keywords{optimization \and multisecant method \and matrix-free \and inaccurate data \and noisy}
\end{abstract}

\section{Introduction}\label{intro}

Gradient-based optimization can be a valuable tool in the engineering design process.  This is especially true when the target design is governed by complex (nonlinear) physics and there are many design parameters.  In such circumstances, even a seasoned engineer will be challenged to choose the parameters using intuition alone.  Exemplar applications of gradient-based optimization include aerodynamic shape optimization~\cite{Jameson1988aerodynamic,Reuther1999part1,Reuther1999part2,Nielsen1999aerodynamic,Anderson1999airfoil,Nemec2002newton,Nemec2004multipoint}, aerostructural design~\cite{Maute2001coupled,Martins2002coupled,Maute2003sensitivity,Kenway2013scalable,Kenway2014multipoint}, structural topology optimization~\cite{Bendsoe1988generating,Zhou1991coc,Sethian2000structural,Dijk2013levelset}, and satellite design \cite{Hwang2014large}, to name a few.

A practical challenge in applying gradient-based optimization is the efficient and accurate evaluation of derivatives.  Conventional optimization methods rely on accurate derivative information for computing search directions and for globalization strategies, \eg line-search and trust-region methods.  Inaccurate gradients and/or objectives produce inconsistencies during the globalization that ultimately lead to failure of the optimization algorithm.  The failure can occur well before the optimization has made significant progress toward improving the design or satisfying the constraints.

Given the importance of accurate and efficient gradients, it is hardly surprising that considerable work has been devoted to methods for differentiating engineering analysis codes.  For example, analytic sensitivity methods like the direct and adjoint senstivity methods~\cite{Pironneau1974optimum,Haftka1986structural,Jameson1988aerodynamic} are now used routinely in partial-differential equation (PDE) solvers, including some commerical codes~\footnote{For example, ANSYS CFD now provides adjoint capabilities}.  In addition, algorithmic differentiation~\cite{Griewank2008evaluating} has matured to the point that it can be applied to a large, complex PDE solver library~\cite{Albring2016efficient}.

Nevertheless, despite the progress in computing derivatives, there remain industrially relevant applications for which inaccuracies in the gradient are theoretically or practically unavoidable:
\begin{itemize}
  \item Problems governed by chaotic dynamics, in which the objective is a time-averaged quantity, cannot be treated using conventional sensitivity analysis methods~\cite{Lea2000sensitivity}.  Instead, methods like the least-square-shadowing (LSS) adjoint are needed~\cite{Wang2014least}; however, even the LSS adjoint produces gradients with errors that cannot be eliminated.
  \item Difficult nonlinear analyses often suffer from incomplete convergence of their algebraic solvers.  An example is the solution of the Reynolds-averaged Navier-Stokes equations for configurations at high lift conditions, where two to three orders reduction in the nonlinear residual may be acceptable.  Incomplete convergence leads to errors in both the objective function and the gradient.
  \item When the continuous-adjoint method~\cite{Pironneau1974optimum,Jameson1988aerodynamic} is used, the resulting gradient is inconsistent with the objective function in that it differs from the true gradient by errors on the order of the discretization~\cite{Collis2002analysis}.
  \item If the analysis mesh is regenerated during a line search, the gradient based on the previous mesh and the objective based on the new mesh will be inconsistent; this can occur when using a Cartesian adaptive flow solver~\cite{Aftosmis1997lecture,Aftosmis1998robust} for aerodynamic shape optimization~\cite{Nemec2006aerodynamic}.
  \item In a multifidelity analysis~\cite{Balabanov2004multi}, the objective may be evaluated using a high-fidelity model and the gradient may be evaluated using a low-fidelity model, again leading to inconsistencies.
\end{itemize}
While some of the above problems can be ameliorated in theory, \eg mesh refinement in the case of the continuous adjoint or mesh regeneration, the computational cost may not be acceptable in practice.

In this work, we present a multisecant quasi-Newton algorithm designed to address derivative-based optimization in the presence of inaccurate data.  Multisecant methods~\cite{Vanderbilt1984total,Eyert1996comparative} are a generalization of Broyden's method for nonlinear equations~\cite{Broyden1965class}, and they have been shown to be particularly effective for solving nonlinear equations that are high dimensional, expensive to evaluate, and potentially noisy~\cite{Bierlaire2006solving,Chelikowsky1996quantum,Fang2009two}.

To solve constrained optimization problems using multisecant methods, we formulate the first-order necessary optimality conditions as an equivalent set of nonlinear equations and apply the multisecant update directly to these equations.  An advantage of this approach is that it is matrix-free in the sense that it does not require the constraint Jacobian or Hessian of the Lagrangian.  This should be contrasted with conventional algorithms that require, at the least, the explicit constraint Jacobian.  The constraint Jacobian is especially problematic in PDE-constrained optimization problems with many state-dependent constraints, because each such constraint requires the solution of an adjoint equation.


The rest of the paper is organized as follows.  Section~\ref{sec:prelim} formally defines the optimization problem and shows how the first-order necessary conditions can be recast as an equivalent set of semi-smooth nonlinear equations.  Section~\ref{sec:algorithm} begins with a general review of multisecant methods, and then describes the particulars of our implementation, including how we incorporate preconditioning and how we handle nonconvexity.  Some numerical experiments are provided in Section~\ref{sec:results} to verify the method and demonstrate its effectiveness.  Finally, Section~\ref{sec:conclude} summarizes our findings and discusses future work.

\section{Preliminaries}\label{sec:prelim}

\ignore{
\subsection{Notation}

Vectors are represented with bold type.  We will reserve $\bm{x} \in
\mathbb{R}^{n}$ for the primal variables, which are sometimes called the design or control variables in the engineering optimization literature.  Lagrange multipliers, or dual variables, associated with the constraints will be denoted by $\lag \in \mathbb{R}^{m}$.

Uppercase letters in san-serif font are used to represent matrices.  For
example, $\mat{A} \in \mathbb{R}^{n\times m}$ is an $n\times m$ matrix.  The
$n\times n$ identity matrix is denoted by $\mat{I}_{n}$ and the $n\times m$
matrix of zeros is denoted by $\mat{0}_{n\times m}$; the subscripts will be
dropped if the dimensions of the matrix can be inferred from the context.
}

\subsection{Problem Definition}

We consider nonlinear optimization problems of the form
\begin{equation}\label{eq:opt}\tag{\text{P}}
  \begin{alignedat}{2}
    &\min_{x} & \quad f(x)&, \\
    &\mathrm{s.t.} & \eq(x) &= 0, \\
    &              & \ineq(x) &\geq 0,
  \end{alignedat}
\end{equation}
where $x \in \mathbb{R}^{n}$ denotes the optimization variables, $f:\mathbb{R}^{n} \rightarrow \mathbb{R}$ is the objective, $\eq : \mathbb{R}^{n} \rightarrow \mathbb{R}^{\me}$ are the equality constraints, and $\ineq : \mathbb{R}^{n} \rightarrow \mathbb{R}^{\mi}$ are the inequality constraints.  We will assume that the objective and constraints are continuously differentiable.

Our strategy for solving \eqref{eq:opt} is based on Newton's method, which requires that we recast the problem as a system of nonlinear equations.  To that end, we recall that a solution to \eqref{eq:opt} must satisfy the first-order optimality conditions, also known as the Karush-Kuhn-Tucker (KKT) conditions.

\begin{thrm}[Karush-Kuhn-Tucker (KKT) conditions]\label{thrm:kkt}
  Let $x^*$ denote a local solution of \eqref{eq:opt}.  If the set of active
  constraint gradients, namely
  \begin{equation*}
  \left\{ \nabla \eq_i(x^*) \; | \: i=1,\ldots,\me \right\} \bigcup \left\{ \nabla \ineq_i(x^*) \; | \; \ineq_i(x^*) =0,\; i = 1,\ldots,\mi \right\},
  \end{equation*}
  is linearly independent, then there are multiplier values $\lage^{*}$ and $\lagi^{*}$ such that 
  \begin{subequations}\label{eq:kkt}
    \begin{align}
     \nabla L(x^*,\lage^*,\lagi^*) & = 0, \label{eq:kkt_opt} \\
      \eq(x^*) &= 0, \label{eq:kkt_eq_feas} \\
      \ineq_i(x^*) \lagi_i^* &= 0, \quad \forall i = 1,\ldots,\mi, \label{eq:kkt_comp} \\
      \ineq_i(x^*) &\geq 0, \quad \forall i = 1,\ldots,\mi, \label{eq:kkt_ineq_feas} \\
      \lagi_i^{*} &\geq 0, \quad \forall i = 1,\ldots,\mi, \label{eq:kkt_mult_bound}
    \end{align}
  \end{subequations}
  where $L(x,\lage,\lagi) \equiv f(x) - \eq(x)^T \lage - \ineq(x)^T\lagi$ is the Lagrangian.
\end{thrm}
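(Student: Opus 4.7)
The plan is to prove this classical Karush-Kuhn-Tucker necessity result by combining (a) a tangent-cone characterization of local optimality with (b) a theorem-of-alternatives argument (Farkas' lemma). The linear independence hypothesis on the active constraint gradients is exactly the LICQ, and its role is to turn a geometric tangent cone into an algebraic one.

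First I would fix notation. Let $\fnc{A}(x^*) \equiv \{ i \in \{1,\ldots,\mi\} : \ineq_i(x^*) = 0\}$ denote the active inequality index set, and introduce the linearized feasible cone
\begin{equation*}
  T_L \equiv \bigl\{ d \in \mathbb{R}^n : \nabla \eq_i(x^*)\Tr d = 0,\ i=1,\ldots,\me;\ \nabla \ineq_i(x^*)\Tr d \ge 0,\ i \in \fnc{A}(x^*)\bigr\}.
\end{equation*}
The elementary half of the argument shows that if $x^*$ is a local minimum of \eqref{eq:opt} and $d$ belongs to the actual sequential tangent cone at $x^*$, then $\nabla f(x^*)\Tr d \ge 0$: take a feasible sequence $x^k \to x^*$ with normalized directions converging to $d$, use $f(x^k) \ge f(x^*)$, and divide by the step length.

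The main obstacle, and the step where LICQ is used, is promoting this to the statement that $\nabla f(x^*)\Tr d \ge 0$ for every $d \in T_L$. For this I would invoke the standard implicit function theorem construction: given any $d \in T_L$, use the fact that the Jacobian formed by $\{\nabla \eq_i(x^*)\}$ together with $\{\nabla \ineq_i(x^*)\}_{i\in \fnc{A}(x^*)}$ has full row rank (by LICQ) to parametrize a smooth curve $x(t)$ with $x(0)=x^*$, $\dot{x}(0)=d$, along which $\eq(x(t))=0$ and $\ineq_i(x(t)) \ge 0$ for $t \ge 0$ small (equality on indices where $\nabla \ineq_i(x^*)\Tr d = 0$, strict inequality on the others by continuity). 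Local optimality applied to this curve yields the claimed sign on $\nabla f(x^*)\Tr d$.

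Finally I would apply Farkas' lemma. The inclusion
\begin{equation*}
  \{ d : \nabla \eq_i(x^*)\Tr d = 0,\ \nabla \ineq_i(x^*)\Tr d \ge 0 \text{ for } i \in \fnc{A}(x^*)\} \subseteq \{ d : \nabla f(x^*)\Tr d \ge 0\}
\end{equation*}
is equivalent, by Farkas, to the existence of scalars $\lage_i^* \in \mathbb{R}$ and $\lagi_i^* \ge 0$ for $i \in \fnc{A}(x^*)$ with
\begin{equation*}
  \nabla f(x^*) = \sum_{i=1}^{\me} \lage_i^* \nabla \eq_i(x^*) + \sum_{i \in \fnc{A}(x^*)} \lagi_i^* \nabla \ineq_i(x^*),
\end{equation*}
which is \eqref{eq:kkt_opt} once one extends $\lagi_i^* = 0$ for $i \notin \fnc{A}(x^*)$. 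That extension automatically delivers complementarity \eqref{eq:kkt_comp} and dual feasibility \eqref{eq:kkt_mult_bound}, while primal feasibility \eqref{eq:kkt_eq_feas} and \eqref{eq:kkt_ineq_feas} holds because $x^*$ is feasible. Uniqueness of the multipliers, although not asserted in the statement, also follows from LICQ since the active gradients form a linearly independent system. The delicate point, which deserves the most care in the write-up, is the construction of the feasible arc in the second paragraph; everything else is bookkeeping.
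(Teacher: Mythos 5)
The paper does not actually prove Theorem~\ref{thrm:kkt}: it is stated as the classical first-order necessity result and used as a black box, so there is no in-paper argument to compare yours against. Your proposal is the standard textbook proof under LICQ --- local optimality gives $\nabla f(x^*)^T d \ge 0$ for every direction $d$ in the sequential tangent cone; LICQ plus the implicit function theorem shows the linearized cone $T_L$ is contained in that tangent cone; and Farkas' lemma converts the resulting inclusion into the existence of multipliers --- and it is correct, with signs consistent with the paper's Lagrangian $L = f - \eq^T\lage - \ineq^T\lagi$, so that your Farkas identity is exactly \eqref{eq:kkt_opt}, and the zero-extension of $\lagi^*$ off the active set delivers \eqref{eq:kkt_comp} and \eqref{eq:kkt_mult_bound} while feasibility of $x^*$ gives \eqref{eq:kkt_eq_feas} and \eqref{eq:kkt_ineq_feas}. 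One small correction for the write-up of the arc construction, which you rightly flag as the crux: for an \emph{active} inequality with $\nabla \ineq_i(x^*)^T d > 0$, nonnegativity of $\ineq_i$ along the arc for small $t>0$ follows from the first-order expansion $\ineq_i(x(t)) = t\,\nabla \ineq_i(x^*)^T d + o(t)$, not ``by continuity'' --- continuity suffices only for the \emph{inactive} constraints, where $\ineq_i(x^*)>0$. The cleanest version holds every active constraint at $\ineq_i(x(t)) = t\,\nabla\ineq_i(x^*)^T d$ (and $\eq(x(t))=0$) via the implicit function theorem applied to the square system obtained by appending a null-space condition, which treats all active indices uniformly and yields $\dot{x}(0)=d$ directly.
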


Newton's method cannot be applied directly to the KKT conditions, because it cannot enforce the bounds \eqref{eq:kkt_ineq_feas} and \eqref{eq:kkt_mult_bound}.  That said, Newton's method can be applied indirectly.  For example, interior-point methods deal with the bounds \eqref{eq:kkt_ineq_feas} and \eqref{eq:kkt_mult_bound} by introducing a homotopy map with a barrier term and using a sequence of Newton solves, while active-set methods attempt to predict the active inequality constraints and treat them as equality constraints during a Newton-like iteration.

Our approach for dealing with inequality constraints is related to the active-set approach and is based on the following theorem due to Mangasarin~\cite{Mangasarian1976equivalence}.

\begin{thrm}\label{thrm:mangasarian}
Let $G:\mathbb{R}\rightarrow\mathbb{R}$ be any strictly increasing function, that is $a > b \Leftrightarrow G(a) > G(b)$, and let $G(0) = 0$.  Then $\ineq_i(x^*)$ and $\lagi_i^*$ satisfy the complementarity conditions \eqref{eq:kkt_comp}, \eqref{eq:kkt_ineq_feas}, and \eqref{eq:kkt_mult_bound} if and only if
\begin{equation*}
	G(|\ineq_i(x^*) - \lagi_i^*|) - G(\ineq_i(x^*)) - G(\lagi_i^*) = 0.
\end{equation*}
\end{thrm}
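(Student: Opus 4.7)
Fix an index $i$ and abbreviate $a \equiv \ineq_i(x^*)$, $b \equiv \lagi_i^*$. The claim reduces to the scalar equivalence
\[
a\geq 0,\; b\geq 0,\; ab=0 \;\Longleftrightarrow\; G(|a-b|)-G(a)-G(b)=0,
\]
which I would prove using only the two hypotheses: strict monotonicity and $G(0)=0$. Note that the right-hand expression is symmetric in $a$ and $b$ because $|a-b|=|b-a|$, so both directions may be argued up to swapping the two variables. Also, strict monotonicity together with $G(0)=0$ immediately gives the sign rule $G(t)>0$ for $t>0$ and $G(t)<0$ for $t<0$, which I will use repeatedly.

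For the forward direction, the complementarity condition \eqref{eq:kkt_comp} forces $a=0$ or $b=0$. Supposing, by symmetry, that $b=0$, we have $|a-b|=|a|=a$ since $a\geq 0$, and the expression collapses to $G(a)-G(a)-G(0)=0$.

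For the reverse direction, I would proceed by case analysis on the signs of $a$ and $b$, using the sign rule to obtain strict inequalities that rule out every "bad" configuration. If $a<0$ and $b\geq 0$, then $|a-b|=b-a>b\geq 0$, so $G(|a-b|)>G(b)$ and $-G(a)>0$, forcing the expression to be strictly positive; the case $b<0,\,a\geq 0$ is symmetric. If $a,b<0$, then $G(|a-b|)\geq 0$ while $-G(a)-G(b)>0$, again strictly positive. The remaining case $a,b\geq 0$ is handled by showing additionally that $ab=0$: without loss of generality assume $a\geq b\geq 0$, so $|a-b|=a-b\leq a$ and hence $G(a-b)\leq G(a)$ with equality only when $b=0$, while $G(b)\geq 0$ with equality only when $b=0$. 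The expression is therefore $\leq 0$ and vanishes precisely when $b=0$, which yields $ab=0$ as required.

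The main obstacle is really only bookkeeping: the key is to turn each inequality coming from monotonicity into a \emph{strict} one in the bad cases, so that every sign configuration inconsistent with complementarity is incompatible with the equation holding. Exploiting the $(a,b)$ symmetry of $G(|a-b|)-G(a)-G(b)$ halves the case work, and no structural property of $G$ beyond strict monotonicity and $G(0)=0$ is ever invoked.
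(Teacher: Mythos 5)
Your argument is correct and complete. Note, however, that the paper does not actually reprove this result: its ``proof'' is a one-line citation to Mangasarian's original 1976 paper, with a remark about which quantities play the roles of $F_i(z)$ and $z_i$ there. What you have written is therefore a genuinely self-contained alternative: a direct scalar case analysis using only the sign rule $G(t)>0$ for $t>0$ and $G(t)<0$ for $t<0$ (which follows from strict monotonicity and $G(0)=0$), the symmetry of $G(|a-b|)-G(a)-G(b)$ in $(a,b)$, and, in the key case $a\geq b\geq 0$, the observation that the expression splits into two nonpositive terms $\bigl(G(a-b)-G(a)\bigr)$ and $\bigl(-G(b)\bigr)$, each vanishing exactly when $b=0$. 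Each step checks out: the three ``bad'' sign configurations are all ruled out by strict inequalities, and the remaining case correctly recovers $ab=0$. Your version buys the reader a complete argument without consulting the reference, at the cost of a page of bookkeeping that the authors chose to outsource; if anything, it makes explicit that nothing beyond strict monotonicity and $G(0)=0$ is needed, which the paper only asserts implicitly by citation.
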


\begin{proof}
The original proof given in \cite{Mangasarian1976equivalence} holds with $\ineq_i(x^*)$ taking the role of $F_i(z)$ and $\lagi_i$ taking the role of $z_i$.  \qed
\end{proof}

Theorem~\ref{thrm:mangasarian} shows that we can replace the problematic complementarity conditions with an equivalent set of nonlinear equations.  Furthermore, these nonlinear equations will be amenable to Newton-like solution methods if we make an appropriate choice for the function $G$ appearing in the theorem.  Here we adopt the simple choice $G(z) = \frac{1}{2} z$.  This choice leads to a nonlinear system that is differentiable almost everywhere and, therefore, is suitable for Newton-like methods.

\begin{remark}
The factor of $\frac{1}{2}$ in $G(z) = \frac{1}{2} z$ is not strictly necessary; it merely avoids the factor of 2 in the nonlinear equation $|\ineq_i(x) - \lagi_i| - \ineq_i(x) - \lagi_i$, which evaluates to $2\ineq_i(x)$ or $2\lagi_i$, depending on the sign of $\ineq_i(x) - \lagi_i$.
\end{remark}

\begin{remark}
With $G(z) = \frac{1}{2} z$, the nonlinear equation $\frac{1}{2}\left( |\ineq_i(x) - \lagi_i| - \ineq_i(x) - \lagi_i \right)$ is not differentiable along $\ineq_i(x) = \lagi_i$.  However, if we have strict complementarity at the solution, \ie $\ineq_i(x^*) - \lagi^* \neq 0$, the equation is locally differentiable as required by Newton's method.  Thus, our approach is in the class of semi-smooth Newton methods~\cite{Qi1993nonsmooth}.
\end{remark}

We conclude this section by summarizing the above results and introducing some notation and definitions to make the subsequent presentation more concise.  Let $y \in \mathbb{R}^{N}$, with $N \equiv n+\me+\mi$, be the compound vector composed of the primal variables and multipliers:
\begin{equation*}
  y \equiv \begin{bmatrix} x^T & \lage^T & \lagi^T \end{bmatrix}^T.
\end{equation*} 
In addition, let the nonlinear residual function $r : \mathbb{R}^{N} \rightarrow \mathbb{R}^{N}$ be defined by
\begin{equation}
  r(y) \equiv
  \begin{bmatrix}
  \nabla L(x,\lage,\lagi) \\
    -\eq(x) \\
    \frac{1}{2} \left( |\ineq(x) - \lagi | - \ineq(x)  - \lagi \right)
\end{bmatrix}, \label{eq:res} \\
\end{equation}
where the Lagrangian, $L(x,\lage,\lagi)$ was defined in Theorem~\ref{thrm:kkt}, and the absolute value in the last block is to be interpreted componentwise.  Using these definitions, Theorems~\ref{thrm:kkt} and \ref{thrm:mangasarian} imply the following result.

\begin{corollary}
Let $x^*$ be a local solution of \eqref{eq:opt}, and assume that active constraint gradients are linearly independent.  Then there exists multipliers $\lage^*$ and $\lagi^*$ such that $y^* = \begin{bmatrix} x^{*T} & \lage^{*T} & \lagi^{*T} \end{bmatrix}^{T}$ satisfies
\begin{equation*}
	r(y^*) = 0.
\end{equation*}
\end{corollary}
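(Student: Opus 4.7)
The plan is to prove the corollary by a direct block-by-block verification that each of the three components of $r(y^*)$ vanishes at the stated $y^*$, leveraging Theorem~\ref{thrm:kkt} and Theorem~\ref{thrm:mangasarian} in sequence. There is essentially no new analytic content to produce; the work lies in matching the hypotheses of the two theorems to the definition of $r$ in \eqref{eq:res}.

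First, I would invoke Theorem~\ref{thrm:kkt}. Since $x^*$ is assumed to be a local solution of \eqref{eq:opt} and the active constraint gradients are linearly independent, the LICQ-based KKT theorem supplies multipliers $\lage^* \in \mathbb{R}^{\me}$ and $\lagi^* \in \mathbb{R}^{\mi}$ such that \eqref{eq:kkt_opt}--\eqref{eq:kkt_mult_bound} hold. I would then define $y^* \equiv [x^{*T}\;\lage^{*T}\;\lagi^{*T}]^T$ and examine each block of $r(y^*)$ in turn. The top block $\nabla L(x^*,\lage^*,\lagi^*)$ vanishes by \eqref{eq:kkt_opt}, and the middle block $-\eq(x^*)$ vanishes by \eqref{eq:kkt_eq_feas}. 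Both identifications are immediate from the definitions.

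The third block is where Theorem~\ref{thrm:mangasarian} is needed. I would apply the theorem with the specific choice $G(z) = \tfrac{1}{2} z$, first verifying that this $G$ satisfies the two hypotheses of the theorem, namely that $G$ is strictly increasing on $\mathbb{R}$ and that $G(0)=0$. Because \eqref{eq:kkt_comp}, \eqref{eq:kkt_ineq_feas}, and \eqref{eq:kkt_mult_bound} hold at $(x^*,\lagi^*)$, the ``only if'' direction of Theorem~\ref{thrm:mangasarian} yields, componentwise,
\begin{equation*}
\tfrac{1}{2}\bigl(|\ineq_i(x^*) - \lagi_i^*| - \ineq_i(x^*) - \lagi_i^*\bigr) = 0, \qquad i = 1,\ldots,\mi,
\end{equation*}
which is precisely the third block of $r(y^*)$. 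Collecting the three vanishing blocks gives $r(y^*) = 0$, as required.

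There is no substantive obstacle in this proof: it is a bookkeeping exercise that couples the classical KKT theorem with Mangasarian's reformulation. The only place to be careful is ensuring the chosen $G$ meets the hypotheses of Theorem~\ref{thrm:mangasarian} and that the sign conventions in \eqref{eq:res} (in particular the $-\eq(x)$ entry and the $\tfrac{1}{2}$ prefactor) match the corresponding conditions in \eqref{eq:kkt}. If anything deserved highlighting, it would be that the corollary is a one-way implication; the converse (that any root of $r$ corresponds to a KKT point) would also follow from the ``if'' direction of Theorem~\ref{thrm:mangasarian}, but that is not claimed here and need not be argued.
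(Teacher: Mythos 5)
Your proof is correct and follows exactly the route the paper intends: the paper states the corollary as an immediate consequence of Theorems~\ref{thrm:kkt} and \ref{thrm:mangasarian} (with $G(z)=\tfrac{1}{2}z$) applied block-by-block to the definition of $r$ in \eqref{eq:res}, which is precisely the verification you carry out. No discrepancies to note.
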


Our basic approach to finding a local solution to the optimization problem \eqref{eq:opt} is to solve $r(y)=0$.  The following section describes the multisecant algorithm that we use for this purpose. 


\section{Algorithm Description}\label{sec:algorithm}

\subsection{Newton's Method and Mutisecant Methods}

Before describing our particular algorithm, we briefly review the class of
multisecant methods upon which it is based.  For a more complete review of
multisecant methods see~\cite{Fang2009two}.

In this subsection, we consider the generic problem of solving $r(y) =
0$, where $r : \mathbb{R}^{N} \rightarrow \mathbb{R}^{N}$ is
continuously differentiable almost everywhere.  In particular, we will not be concerned with avoiding stationary points that are not local minimizers of \eqref{eq:opt}; we will address nonconvexity in \ref{sec:nonconvex}.  

Multisecant methods are approximations to Newton's method.  Newton's method
itself is based on the linear approximation\footnote{Subscripts in this section refer to iteration number, not the component of the vector as they did in the previous section.}
\begin{equation}\label{eq:linear_approx}
  r(y_k + \Delta y_k) \approx r(y_k) + \mat{J}(y_k) \Delta y_k,
\end{equation}
where $y_{k}$ is the estimated solution at iteration $k$, and $\mat{J} :
\mathbb{R}^{N} \rightarrow \mathbb{R}^{N\times N}$ is the Jacobian $\partial r/\partial y$.
The Newton step is found by setting the right-hand side above to zero and
solving for $\Delta y_k$:
\begin{equation}\label{eq:newton}
  \Delta y_k = -\mat{J}(y_k)^{-1} r_k
\end{equation}
where $r_k \equiv r(y_k)$.  The next iterate of Newton's method is then obtained as $y_{k+1} = y_k + \Delta y_{k}$, usually with some safe-guards on the step to ensure globalization.

A potential disadvantage of Newton's method is that forming and/or inverting the
Jacobian can be expensive.  There are several strategies for reducing or
avoiding these costs, one being the inexact-Newton-Krylov class of methods~\cite{Knoll2004jacobian}; Newton-Krylov methods require only Jacobian-vector products, and therefore avoid
the need to form the Jacobian explicitly.  Quasi-Newton methods offer an
alternative strategy that avoids the need to form the exact Jacobian; they store
an approximation to the Jacobian (or its inverse), $\mat{J}_k \approx
\mat{J}(\bm{y}_k)$, and update this approximation at each iteration using
low-rank matrices.  Multisecant methods belong to the class of quasi-Newton
methods.

Multisecant methods get their name from the secant condition, which is obtained
by replacing the approximation in \eqref{eq:linear_approx} with an equality:
\begin{equation}\label{eq:sec}
  \mat{J}_{k+1} \Delta y_k = \Delta r_k,
\end{equation}
where $\Delta r_k \equiv r_{k+1} - r_k$.  Alternatively, when
approximating the inverse Jacobian with $\mat{G}_{k+1} \approx
\mat{J}(\bm{y}_{k+1})^{-1}$, the secant condition becomes
\begin{equation}\label{eq:sec_inv}
  \mat{G}_{k+1} \Delta r_k = \Delta y_k.
\end{equation}
The secant condition is the basis for Broyden's method for nonlinear
equations~\cite{Broyden1965class}, as well as several popular quasi-Newton methods for
optimization, namely DFP~\cite{Davidon1991variable}, BFGS~\cite{Broyden1970convergence,Fletcher1970new,Goldfarb1970new,Shanno1970conditioning}, and SR1~\cite{Conn1991convergence}.

Rather than a single secant condition, Vanderbilt and Louie~\cite{Vanderbilt1984total} and
Eyert~\cite{Eyert1996comparative} proposed generalizations of Broyden's method that require
$\mat{G}_{k}$ (or $\mat{J}_{k}$) to satisfy a set of $q$ secant equations.  If we define
\begin{align*}
  \mat{Y}_{k} &= \begin{bmatrix}
    \Delta y_{k-q} & \Delta y_{k-q+1} & \cdots & \Delta y_{k-1}
  \end{bmatrix} \\
  \text{and}\qquad
  \mat{R}_{k} &= \begin{bmatrix}
    \Delta r_{k-q} & \Delta r_{k-q+1} & \cdots & \Delta r_{k-1}
  \end{bmatrix},\qquad
\end{align*}
then the $q$ previous secant conditions can be written succinctly as
\begin{equation}\label{eq:multisec}
  \mat{G}_{k} \mat{R}_k = \mat{Y}_k.
\end{equation}
In general $q < N$, so the $q$ conditions in \eqref{eq:multisec} are
insufficient to define $\mat{G}_k$, and additional conditions are necessary.

In the context of optimization, the Jacobian of the KKT conditions is symmetric, so symmetry provides another condition we might consider imposing on $\mat{G}_k$.  Indeed, most quasi-Newton methods for optimization that are based on the single secant condition, \eqref{eq:sec} or
\eqref{eq:sec_inv}, do impose symmetry on the approximation of the Hessian, KKT
matrix, or their inverses.  Therefore, it is interesting to consider whether
symmetry can be imposed on a multisecant quasi-Newton approximation.
Unfortunately, it is easy to show that the answer is negative.

\begin{prop}\label{thrm:sym}
  A quasi-Newton approximation $\mat{G}_{k}$ that satisfies the multisecant
  condition~\eqref{eq:multisec} cannot be symmetric, in general, if $q > 1$.
\end{prop}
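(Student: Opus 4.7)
My plan is to derive a simple necessary consistency condition on the data matrices $\mat{R}_k$ and $\mat{Y}_k$ that follows from the existence of a symmetric multisecant approximation, and then observe that this condition is not satisfied in general.

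First I would suppose, for contradiction, that there exists a symmetric $\mat{G}_k = \mat{G}_k^T$ satisfying the multisecant condition $\mat{G}_k \mat{R}_k = \mat{Y}_k$. Left-multiplying by $\mat{R}_k^T$ gives the $q \times q$ identity $\mat{R}_k^T \mat{G}_k \mat{R}_k = \mat{R}_k^T \mat{Y}_k$. Because $\mat{G}_k$ is symmetric, the left-hand side is symmetric, so a necessary condition is
\begin{equation*}
  \mat{R}_k^T \mat{Y}_k = \mat{Y}_k^T \mat{R}_k.
\end{equation*}
This is the heart of the argument: symmetry of the approximation forces the observed pairs $\{(\Delta y_i,\Delta r_i)\}$ to satisfy $\langle \Delta r_i, \Delta y_j\rangle = \langle \Delta y_i, \Delta r_j\rangle$ for all pairs in the history window.

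Next I would note that for $q = 1$ the matrix $\mat{R}_k^T \mat{Y}_k$ is a scalar and the condition is vacuous, which is consistent with the fact that classical symmetric single-secant updates (DFP, BFGS, SR1) exist. For $q > 1$, however, $\mat{R}_k^T \mat{Y}_k - \mat{Y}_k^T \mat{R}_k$ has $\binom{q}{2}$ independent entries that must all vanish, and these are algebraic constraints on the history data. Since $\mat{R}_k$ and $\mat{Y}_k$ are generated by the iteration itself and are not under the control of the update, there is no reason for these constraints to hold generically.

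To make ``in general'' precise, I would exhibit a small counterexample — say $q = 2$ with $\Delta y_{k-2},\Delta y_{k-1},\Delta r_{k-2},\Delta r_{k-1}$ chosen so that $\Delta r_{k-2}^T \Delta y_{k-1} \neq \Delta y_{k-2}^T \Delta r_{k-1}$ — which immediately violates the necessary condition and hence rules out any symmetric $\mat{G}_k$. The only subtlety is the phrase ``in general'' in the statement; I would interpret it as meaning that the set of histories $(\mat{R}_k,\mat{Y}_k)$ for which a symmetric solution exists is a proper (in fact measure-zero) subset of all possible histories, which is established by the counterexample together with continuity. No routine calculations are required beyond writing down four short vectors, so I do not anticipate a substantive obstacle.
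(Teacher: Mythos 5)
Your proposal is correct and follows essentially the same argument as the paper: assume $\mat{G}_k$ symmetric, left-multiply the multisecant condition by $\mat{R}_k^T$ to conclude that $\mat{R}_k^T\mat{Y}_k$ must be symmetric, and observe that this fails for generic data when $q>1$. Your additional remarks on why $q=1$ is vacuous and on making ``in general'' precise via a counterexample are sound refinements of the same idea, not a different route.
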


\begin{proof}
  Assume that $\mat{G}_{k} = \mat{G}_{k}^T$.  Then, left multiplying
  \eqref{eq:multisec} by $\mat{R}_k^{T}$, we have
  \begin{equation*}
    \mat{R}_k^T \mat{Y}_k = \mat{R}_k^{T} \mat{G}_{k} \mat{R}_k =
    \mat{R}_k^{T} \mat{G}_{k}^T \mat{R}_k
    = \mat{Y}_k^T \mat{R}_k.
  \end{equation*}
  This implies that $\mat{R}_k^T \mat{Y}_k$ is symmetric, which is not possible in general.     Therefore, our assumption on the possible symmetry of $\mat{G}_k$ must be false. \qed
\end{proof}

\begin{remark}
While the product $\mat{R}_k^T \mat{Y}_k$ is not symmetric in general, there are at least two cases for which it is symmetric.  The first case is when $q=1$ (one secant condition), and the second case is when the objective is quadratic and constraints are linear; in the latter case, we have $\mat{R}_k = \mat{B} \mat{Y}_k$ for some symmetric matrix $\mat{B}$.
\end{remark}

While we cannot impose symmetry on $\mat{G}_k$, we can follow the approach used
in the generalized Broyden's method~\cite{Eyert1996comparative}; specifically, $\mat{G}_{k}$ is
required to be as close as possible, in the Frobenius norm, to some
estimate of the inverse Jacobian, which we will denote by $\tilde{\mat{G}}_k$.  For example, 
\cite{Eyert1996comparative} proposes using the previous estimate for the inverse Jacobian, that is $\tilde{\mat{G}}_k = \mat{G}_{k-q}$.  We will discuss other possible choices for $\tilde{\mat{G}}_k$ in the next subsection.

The requirement that $\mat{G}_k$ be as close as possible to $\tilde{\mat{G}}_k$,
together with the secant condtions~\eqref{eq:multisec}, produces the closed-form
expression for $\mat{G}_k$ provided in the following theorem.

\begin{thrm}\label{thrm:Broyden}
  The approximate inverse of the Jacobian at iteration $k$ of the generalized
  Broyden's method is
  \begin{equation}\label{eq:broyden}
    \mat{G}_k = \tilde{\mat{G}}_k + (\mat{Y}_k - \tilde{\mat{G}}_k \mat{R}_k)(\mat{R}_k^T
    \mat{R}_k)^{-1} \mat{R}_k^T,
  \end{equation}
  which is the solution of
  \begin{equation*}
    \min_{\mat{G} \in \mathbb{R}^{N\times N}} \quad \| \mat{G} - \tilde{\mat{G}}_k \|_{F},
    \qquad\text{s.t} \quad \mat{G} \mat{R}_k = \mat{Y}_k,
  \end{equation*}
  where $\| \cdot \|_{F}$ denotes the Frobenius norm.  
\end{thrm}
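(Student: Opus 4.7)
The plan is to treat this as a constrained convex optimization over the matrix variable $\mat{G}$ and use matrix Lagrange multipliers. First I would rewrite the objective as $\tfrac{1}{2}\|\mat{G}-\tilde{\mat{G}}_k\|_F^2$ (squaring the Frobenius norm is monotone, so this does not change the minimizer) and note that the problem is strictly convex in $\mat{G}$ with affine equality constraints, so any stationary point of the associated Lagrangian will be the unique global minimizer.

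Next I would introduce a matrix of Lagrange multipliers $\mat{\Lambda} \in \mathbb{R}^{N\times q}$ (one column per secant condition) and form
\begin{equation*}
\mathcal{L}(\mat{G},\mat{\Lambda}) = \tfrac{1}{2}\|\mat{G}-\tilde{\mat{G}}_k\|_F^2 - \operatorname{tr}\!\bigl(\mat{\Lambda}^T(\mat{G}\mat{R}_k-\mat{Y}_k)\bigr).
\end{equation*}
Differentiating with respect to $\mat{G}$ (using $\partial \tfrac{1}{2}\|\mat{G}-\tilde{\mat{G}}_k\|_F^2/\partial \mat{G} = \mat{G}-\tilde{\mat{G}}_k$ and $\partial \operatorname{tr}(\mat{\Lambda}^T \mat{G}\mat{R}_k)/\partial \mat{G} = \mat{\Lambda}\mat{R}_k^T$) and setting the result to zero gives the optimality condition
\begin{equation*}
\mat{G} = \tilde{\mat{G}}_k + \mat{\Lambda}\mat{R}_k^T,
\end{equation*}
which says the optimal correction lies in the column space of $\mat{R}_k^T$. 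Substituting this into the secant constraint $\mat{G}\mat{R}_k = \mat{Y}_k$ yields $\tilde{\mat{G}}_k\mat{R}_k + \mat{\Lambda}\mat{R}_k^T\mat{R}_k = \mat{Y}_k$, and solving for $\mat{\Lambda}$ produces $\mat{\Lambda} = (\mat{Y}_k - \tilde{\mat{G}}_k\mat{R}_k)(\mat{R}_k^T\mat{R}_k)^{-1}$. Substituting $\mat{\Lambda}$ back into the expression for $\mat{G}$ gives exactly \eqref{eq:broyden}.

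The main obstacle is really a well-posedness issue rather than an algebraic one: the formula requires $\mat{R}_k^T\mat{R}_k$ to be invertible, which holds if and only if the $q$ residual-difference vectors in $\mat{R}_k$ are linearly independent. I would simply state this as a standing assumption (it is also what makes the $q$ secant conditions consistent and nonredundant); in the preconditioned or rank-deficient case one would replace the inverse by a Moore--Penrose pseudoinverse. Finally, I would give a one-line sufficiency check: for any $\mat{G}$ that also satisfies $\mat{G}\mat{R}_k = \mat{Y}_k$, writing $\mat{G} = \mat{G}_k + \mat{Z}$ forces $\mat{Z}\mat{R}_k = 0$, so $\mat{Z}$ is orthogonal (in the Frobenius inner product) to $\mat{\Lambda}\mat{R}_k^T$, and hence $\|\mat{G}-\tilde{\mat{G}}_k\|_F^2 = \|\mat{G}_k-\tilde{\mat{G}}_k\|_F^2 + \|\mat{Z}\|_F^2 \geq \|\mat{G}_k-\tilde{\mat{G}}_k\|_F^2$, with equality iff $\mat{Z}=0$. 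This confirms that $\mat{G}_k$ given by \eqref{eq:broyden} is the unique minimizer.
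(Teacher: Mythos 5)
Your derivation is correct and complete. Note, however, that the paper does not actually prove Theorem~\ref{thrm:Broyden}; it simply cites Eyert and Fang--Saad, so there is no internal argument to compare against. Your route --- squaring the Frobenius norm, forming the matrix Lagrangian $\tfrac{1}{2}\|\mat{G}-\tilde{\mat{G}}_k\|_F^2 - \operatorname{tr}\bigl(\mat{\Lambda}^T(\mat{G}\mat{R}_k-\mat{Y}_k)\bigr)$, eliminating $\mat{\Lambda}$ through the constraint, and then closing the loop with the orthogonality identity $\langle \mat{Z}, \mat{\Lambda}\mat{R}_k^T\rangle_F = \operatorname{tr}\bigl((\mat{Z}\mat{R}_k)^T\mat{\Lambda}\bigr) = 0$ for any feasible perturbation $\mat{Z}$ --- is the standard one and is airtight. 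Two small points in its favor: first, the final Pythagorean sufficiency check makes the proof self-contained even if one is uneasy about invoking ``stationary point of the Lagrangian equals global minimizer'' for an equality-constrained convex problem, since it directly exhibits $\|\mat{G}-\tilde{\mat{G}}_k\|_F^2 = \|\mat{G}_k-\tilde{\mat{G}}_k\|_F^2 + \|\mat{Z}\|_F^2$ and hence uniqueness; second, you are right to flag that $\mat{R}_k$ must have full column rank for $(\mat{R}_k^T\mat{R}_k)^{-1}$ to exist --- the theorem statement silently assumes this, and the paper only addresses the near-rank-deficient case later, in Section~\ref{sec:algorithm}, by replacing the normal-equations solve with a truncated SVD, which is exactly the pseudoinverse remedy you propose.
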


\begin{proof} The proof of this result can be found in~\cite{Eyert1996comparative} and \cite{Fang2009two}.\qed
\end{proof}

To summarize, a multisecant method makes the approximation $\mat{G}_k \approx \mat{J}(y_k)^{-1}$ in the Newton update \eqref{eq:newton}, where $\mat{G}_k$ is defined by \eqref{eq:broyden}.  Thus, the next iterate in a multisecant method is given by
\begin{equation}\label{eq:ms_update}
  y_{k+1} = y_{k} - \left[\tilde{\mat{G}}_k + (\mat{Y}_k - \tilde{\mat{G}}_k \mat{R}_k)(\mat{R}_k^T
    \mat{R}_k)^{-1} \mat{R}_k^T \right] r_k.
\end{equation}

\subsection{Choosing $\tilde{\mat{G}}_k$}

In order to complete the definition of the multisecant update we must specify a choice for $\tilde{\mat{G}}_k$.  The simplest choice for $\tilde{\mat{G}}_k$ is the scaled identity:
\begin{equation}
	\tilde{\mat{G}}_k = \alpha \mat{I}, 
\end{equation}
where $\alpha > 0$.  This choice makes the update~\eqref{eq:ms_update} equivalent to Anderson-mixing~\cite{Eyert1996comparative}.  Furthermore, in the case of unconstrained optimization with no secant conditions on $\mat{G}_k$ (\ie $q = 0$), the multisecant update reduces to steepest descent, and the parameter $\alpha$ determines the step length via $\| \Delta y_k \| = \alpha \| r(y_k) \|$. 

Even when secant conditions are imposed on $\mat{G}_k$ ($q > 0$), $\alpha$ can (and should) be chosen to influence the step lengths, preventing overly conservative or aggressive updates; however, when $q > 0$, there is no guarantee that the step at iteration $k$ will have length $\alpha \| r(y_k) \|$.  This is because $\mat{G}_k$ must respect the secant conditions, whereas $\| \mat{G}_k - \alpha \mat{I} \|$ is only minimized; see Theorem~\ref{thrm:Broyden}.  This is potentially beneficially, because it is possible for the algorithm to overcome a poor choice for $\alpha$ if $q$ is sufficiently large.

We have found that using the scaled identity for $\tilde{\mat{G}}_k$ is adequate for reasonably well-scaled unconstrained problems, as well as constrained problems with modest numbers of active constraints (fewer than 10); however, for ill-conditioned problems, a more sophisticated choice is necessary.  Ideally, we would use $\tilde{\mat{G}}_k = \mat{J}(y_k)^{-1}$.  While this choice is not practical, it does suggest a class of options for $\tilde{\mat{G}}_k$: preconditioners.

Consider the iterative solution of the Newton update \eqref{eq:newton} using a Krylov method.  For this popular class of methods, the number of iterations is related to the condition number of the linear system~\cite{Saad2003iterative}, so preconditioners are employed to cluster the eigenvalues and/or reduce the conditioning of the system.  Preconditioners are usually based on approximations to
 $\mat{J}(y_k)^{-1}$, which is precisely what we want for $\tilde{\mat{G}}_k$.
Thus, any suitable preconditioner designed for the Krylov-iterative solution of \eqref{eq:newton} can be adopted for $\tilde{\mat{G}}_k$.  We will explore this possibility in the numerical results.

\begin{remark}
  The connection between Krylov and multisecant methods goes beyond their mutual need for preconditioning.  For instance, Walker and Ni~\cite{Walker2011anderson} have shown that the iterates of Anderson acceleration can be obtained from those of GMRES~\cite{Saad1986gmres}, and vice versa, when Anderson acceleration is used to solve linear systems.
\end{remark}

\begin{remark}
 $\tilde{\mat{G}}_{k}$ does not need to be the same matrix at every iteration of the multisecant method, that is, nonstationary preconditioners are permitted.  In this regard, multisecant methods are similar to flexible Krylov iterate methods like FGMRES~\cite{Saad1993flexible}.
\end{remark}

As in the simple case $\tilde{\mat{G}}_k = \alpha \mat{I}$, scaling the chosen preconditioner by $\alpha$ can help control the step size in the early stages of the multisecant method.  If we let $\mat{P}_k^{-1}$ denote a generic preconditioner, then the choice
\begin{equation*}
	\tilde{\mat{G}}_k = \alpha \mat{P}_k^{-1},
\end{equation*}
with $\alpha > 0$, encapsulates both the scaled diagonal ($\mat{P}_k^{-1} = \mat{I}$) as well as more elaborate preconditioners.  This is the form for $\tilde{\mat{G}}_k$ we will use throughout the remaining paper.

\subsection{Handling Nonconvex Problems}\label{sec:nonconvex}

Recall that the KKT conditions~\eqref{eq:kkt}, as well as the equivalent conditions $r(y)=0$, are \emph{necessary}, but not sufficient, conditions for a solution to \eqref{eq:opt}.  Other stationary points, including local maximizers and saddle points, also satisfy $r(y) = 0$.  Newton's method has no way to distinguish between these different types of stationary points, and it can easily converge to the wrong type.  A multisecant method, being based on Newton's method, will suffer the same fate if it is not safe-guarded.

Before we describe how we handle nonconvexity, we first review common strategies used by existing algorithms, and explain why these are not suitable for a multisecant method.
\begin{itemize}
\item Quasi-Newton methods like BFGS are updated in such a way that the approximate inverse of the Lagrangian Hessian remains positive definite.  Consequently, they are guaranteed to produce a descent direction.  Unfortunately, multisecant methods cannot produce a symmetric Hessian, in general, let alone one that is positive definite; see Theorem~\ref{thrm:sym}.

\item For unconstrained problems, a step direction can easily be checked to see if it is a descent direction.  If the step is not a descent direction, it can be discarded and we can resort to a steepest-descent step, for example.  This approach is viable for our algorithm, but it is limited to unconstrained problems.

\item Many optimization algorithms for constrained problems require the user to provide the constraint Jacobian, which can then be factored to determine a basis for its null-space.  Using this basis, an algorithm can project the problem onto a reduced-space, effectively turning it into a unconstrained problem.  In this reduced space, a step can be checked to see if it is a descent direction, analogous to the unconstrained case.  This approach is not possible for multisecant methods, because the Jacobian is not explicitly available.
\end{itemize}

Instead of the above methods, we use a simple Hessian-regularization approach to address nonconvexity.  In the context of the Newton update \eqref{eq:newton}, undesirable steps caused by indefinite Hessians\footnote{Specifically, Hessians that are not positive-definite in the null-space of the linearized constraints.} can be prevented by adding a scaled identity, $\beta \mat{I}$, to the Hessian of the Lagrangian, provided $\beta > 0$ is larger than the most negative eigenvalue of the projected Hessian.
 
While we do not have direct access to the Hessian of the Lagrangian in the multisecant update, the effect of adding $\beta \mat{I}$ to the Hessian can be mimicked by modifying the difference vectors $\Delta r_k$ as follows:
\begin{equation*}
  \Delta r_k \equiv r(y_{k+1}) - r(y_k) + \begin{bmatrix} \beta(x_{k+1} - x_{k}) \\ 0 \\ 0 \end{bmatrix}.
\end{equation*}

There are two drawbacks to Hessian regularization.  First, it will limit the asymptotic rate of convergence to linear, and, second, the ideal value for $\beta$ requires an estimate of the negative eigenvalue of greatest magnitude.  In our experience, the impact on the asymptotic rate of convergence is of minor practical concern: superlinear convergence of the unregularized method is typically limited to the last two or three iterates.

The estimate of $\beta$ is a more serious concern.  In this work we have used trial and error to determine a suitable value for $\beta$.  A more methodical approach would be to use a few iterates of the Lanzcos method~\cite{Lanczos1950iteration} applied to the Hessian of $L$ after convergence, in order to estimate the negative eigenvalues of largest magnitude, if any.  The Lanzcos method is attractive here, because it requires only Hessian-vector products, which can be approximated in a matrix-free manner using a forward difference applied to the Lagrangian gradient ($\nabla L$ is already required by our algorithm).  If negative eigenvalues are found, the algorithm can be restarted with the value of $\beta$ set appropriately.  However, this posterior Lanzcos approach may be overly conservative, since the only negative eigenvalues of significance are those in the null-space of the constraint Jacobian.

\subsection{The Multisecant Accelerated Descent (MAD) Algorithm}

Our proposed optimization method, Multisecant Accelerated Descent (MAD), is summarized in Algorithm~\ref{alg:MAD}.  There are several implementation details that are important to highlight.
\begin{itemize}

\item We assume that the initial multiplier values are zero; see Line~\ref{line:init}.

\item There are different criteria that can be used to assess convergence of the first-order optimality conditions in Line~\ref{line:converge}.  In our implementation, we accept the solution if relative and absolute tolerances on primal optimality and feasibility are met, specifically
\begin{equation}\label{eq:criterion}
\begin{gathered}
	\| \nabla f(x_k) - (\nabla \eq(x_k)^T)\lage_k - (\nabla \ineq(x_k)^T)\lagi_k \| 
	\leq \epsilon_r \| \nabla f(x_0) \| + \epsilon_a \\
	\left\| \begin{matrix} \eq(x_k) \\ \frac{1}{2}(|\ineq(x_k) - \lage_k | - g(x_k) - \lage_k \end{matrix} \right\| \leq \epsilon_r  \left\| \begin{matrix} \eq(x_0) \\ \frac{1}{2}(|\ineq(x_0) - \lage_0 | - g(x_0) - \lage_0 \end{matrix} \right\| + \epsilon_a,
\end{gathered}
\end{equation}
where $\epsilon_r \in (0,1)$ and $\epsilon_a > 0$ are relative and absolute tolerances, respectively.  We use the same tolerances for both primal optimality and feasibility in this work, because our problems are relatively well scaled; in general, different tolerances may be necessary for the two criteria.

\item For problems with noisy/inaccurate data, the computational budget defined by the maximum number of iterations, $K_{\max}$, may be exceeded before the convergence criteria in Line~\ref{line:converge} are met.

\item The least-squares subproblem on Line~\ref{line:least_squares} corresponds to the vector 
\begin{equation*}
	(\mat{R}_k^T \mat{R}_k)^{-1} \mat{R}_k^T r_k = \gamma,
\end{equation*}
seen in the multisecant update~\eqref{eq:ms_update}.  The above expression is the normal-equation solution to the overdetermined problem $\mat{R}_k \gamma = r_k$.  While the normal-equation solution is convenient theoretically, it is not advisable in practice~\cite{Fang2009two}.  This is because the columns of $\mat{R}_k$ can become close to linearly dependent, leading to ill-conditioning in $\mat{R}_k^T \mat{R}_k$.  A common solution to this possible ill-conditioning, and the one adopted here, is to use a truncated singular-value decomposition.  In particular, we truncate singular values that are smaller than $10^{-6}$ relative to the largest singular value.

\item After evaluating the full multisecant step in Line~\ref{line:ms_update}, we check its magnitude in Line~\ref{line:safe_guard} and limit the step to a maximum length of $\Delta_{\max}$, if necessary.

\end{itemize}

\begin{algorithm}[htbp]\DontPrintSemicolon
  \KwData{$x_{0}$, $\alpha > 0$, $\beta > 0$, $\Delta_{\max}$, $q \geq 0$, $K_{\max} \geq 0$, and operator $\mat{P}_k^{-1}$}
  \KwResult{$y$, a approximate solution to $r(y) = 0$}
  \BlankLine
  set $y_0 = \begin{bmatrix} x_0^T & 0^T & 0^T \end{bmatrix}^T$ and 
  compute and store $r_0 = r(y_0)$ \label{line:init}\; 
  \For{$k = 0,1,2,\ldots,K_{\max}$}{
  	\If{$\| r_k \|$ is sufficiently small\label{line:converge}}{
		return $y_k$\;
	}
	\For{$j = 1,2,\ldots,\min(k,q)$}{
		$\Delta y_{k-j} \leftarrow y_{k-j+1} - y_{k-j}$\;
		$\Delta r_{k-j} \leftarrow r_{k-j+1} - r_{k-j} + \beta \begin{bmatrix} \Delta x_{k-j} \\ 0 \\ 0 \end{bmatrix}$\;
	}
	solve $\argmin_{\gamma} \| r_k - \mat{R}_k \gamma \|$ \label{line:least_squares}\;
	$\Delta y_k = - \alpha \mat{P}_k^{-1} r_k - (\mat{Y}_k - \alpha \mat{P}_k^{-1} \mat{R}_k)\gamma$ \label{line:ms_update}\;
	\If{$\| \Delta y_k \| > \Delta_{\max}$ \label{line:safe_guard}}{
		$\Delta y_k \leftarrow \frac{\Delta_{\max}}{\| \Delta y_k \|} \Delta y_k$\;
	}
	$y_{k+1} = y_k + \Delta y_k$\;
	$r_{k+1} = r(y_{k+1})$\;
  }
  \caption{Multisecant Accelerated Descent.\label{alg:MAD}}
\end{algorithm}

Other than the rudimentary step-length safeguard in Line~\ref{line:safe_guard}, our algorithm has no globalization strategies, such as line-search or trust-region methods.  This is unusual and demands some justification.  

When the application has noisy or inaccurate data\footnote{Data here refers to $r(y)$ and $f(x)$.}, it is difficult to distinguish a good step from a poor step.  Consider an unconstrained optimization algorithm in which a sufficient-decrease line search is implemented, and suppose a step causes the objective function to violate the sufficient-decrease condition.  Did this violation happen because the step was poor and the ``true'' value of $f$ increased, or did this happen because of inaccurate data?  If we know enough about the nature of the error in the data, we may be able to answer this question; however, in general, the behavior of the error will be unknown and globalization methods will be unreliable.

When the data is accurate, our justification is more pragmatic: the method seems to more efficient without globalization.  One possible explanation is that multisecant methods may, sometimes, benefit from ``bad" steps, since these steps provide information about the curvature of the problem.  Furthermore, recall that $\alpha$ does provide some control over the step-length size, so it acts as a kind of implicit globalization.  That said, a more rigorous and efficient globalization for error-free problems may be possible with further analysis and investigation.

\begin{remark}
We are not the first to forego standard globalization techniques when using a multisecant method.  Fang and Saad~\cite{Fang2009two} also avoid explicit globalization when using multisecant methods to solve electronic structure calculations.  Their motivation is to minimize unnecessary evaluations of the self-consistent field iteration, which is expensive.  Our target application, PDE-constrained optimizaiton, also leads to expensive function evaluations, so we are also sensitive to the additional evaluations required by globalization methods.
\end{remark}

\section{Numerical Experiments}\label{sec:results}

\subsection{Multidisciplinary Design Optimization Problem}

In the following sets of experiments, we consider a model multidisciplinary design optimization (MDO) problem.  The problem consists of finding a nozzle geometry such that the quasi-one-dimensional Euler equations produce a pressure that is as close as possible to a target pressure.  The nozzle itself can (statically) deform under the pressure loading, and this deformation is modeled using a one-dimensional finite-element beam.  The aerostructural optimization problem is described in detail in Reference~\cite{Dener2017matrix}; below we provide a brief description as needed for the present study.

The MDO problem is posed using the individual-discipline feasible (IDF) formulation~\cite{Haftka1992options,Cramer1994problem}.  The IDF formulation introduces additional optimization variables, called coupling variables, that allow the disciplinary state equations to be solved independently at each optimization iteration.  This helps maintain modularity and also avoids coupled multidisciplinary analyses and coupled adjoints; however, the IDF formulation makes the optimization problem more difficult, because there are many more variables, and it introduces state-based constraints whose Jacobian is expensive to evaluate.  This makes matrix-free optimization methods attractive for this type of problem.

For the elastic-nozzle problem, the IDF optimization statement is
\begin{equation}\label{eq:idf}\tag{\text{IDF}}
  \begin{alignedat}{2}
    &\underset{b,\bar{p},\bar{u}_y}{\mathsf{minimize}} & f(b,\bar{p},\bar{u}_y)&, \\
    &\mathrm{s.t.} & p(b,\bar{u}_y) - \bar{p} &= 0, \\
    &              &  u_y(b,\bar{p}) - \bar{u}_y &= 0.
  \end{alignedat}
\end{equation}
The objective function is a discretization of $\int (p - p_{t})^2\, dx$, where $p$ is the pressure and $p_{t}$ is the target pressure; again, see~\cite{Dener2017matrix} for the details.  The optimization variables consist of 1) the b-spline control points, $b$, that define the unloaded shape of the nozzle, 2) the pressure coupling variables, $\bar{p}$, that are used by the structural model to define the loading, and 3) the nozzle vertical-displacement coupling variables, $\bar{u}_y$, that are used by the flow model to define the nozzle shape (static + displacement).  For a valid solution to \eqref{eq:idf}, the coupling variables $\bar{p}$ and $\bar{u}_y$ must agree with the values of pressure and vertical displacement, respectively, predicted by the analyses.  This requirement is expressed by the (vector) equality constraints in \eqref{eq:idf}.

In order to evaluate the objective and constraints in \eqref{eq:idf} at the $k$th optimization iteration, we must first solve the disciplinary state equations based on the given values of $b_k$, $\bar{p}_k$, and $(\bar{u}_y)_k$.  Furthermore, gradients of the objective and constraints require the solution of adjoint equations. In this work, both the state and adjoint equations are solved iteratively, the former with a Newton solver and the latter with a preconditioned Krylov method.  We will use the iterative solvers' tolerances to control the accuracy of the state and adjoint solutions when we study the impact of inaccurate data on our algorithm.

We benchmark the MAD solution of \eqref{eq:idf} against a previously developed inexact-Newton-Krylov algorithm~\cite{Hicken2015flecs,Dener2016kona}.  Dener and Hicken~\cite{Dener2014revisiting,Dener2017matrix} recently developed a specialized preconditioner that takes advantage of the structure in the IDF formulation.  This preconditioner is used in Algorithm~\ref{alg:MAD} for all solutions of \eqref{eq:idf}, unless stated otherwise.

We use 5 b-spline control points and the mesh has 31 nodes, so $\bar{p}, \bar{u}_y \in \mathbb{R}^{31}$ and $\me = 2\times 31 = 62$; including the multipliers, this gives a problem with $N = 5 + 4\times 31 = 129$ variables.  For all experiments below, the initial guess for $x_0$ is the same used in \cite{Dener2017matrix}.  The relative and absolute tolerances are $\epsilon_r = 10^{-4}$ and $\epsilon_a = 10^{-6}$, respectively.  The maximum allowable primal step is $\Delta_{\max} = 1.0$, and the maximum number of iterations is $K_{\max} = 2000$.

\subsubsection{Parameter Study}

We begin by investigating, in the context of \eqref{eq:idf}, the effect of varying the primary parameters in Algorithm~\ref{alg:MAD}.  These parameters are 1) the number of saved vectors\footnote{Alternatively, $q-1$ is the number of columns in $\mat{R}_k$ and $\mat{Y}_k$, once $k \geq q$.}, $q$, 2) the preconditioner scaling parameter, $\alpha$, and 3) the Hessian regularization parameter $\beta$.

Table~\ref{tab:idf_alpha} lists the number of iterations used by MAD for a range of $q$ and $\alpha$ values, with $\beta = 0.5$.  As $q$ increases for fixed $\alpha$, the number of iterations generally decreases; the only exception to this is $q=10$ and $\alpha=0.01$, which did not converge in $K_{\max} = 2000$ iterations.  Intuitively, increasing the number of secant conditions that are satisfied should increase the accuracy of the approximate inverse $\mat{G}_k$ and, therefore, reduce the number of iterations, at least when sufficiently close to the solution that the linear approximation to $r(y)$ is accurate.

Increasing $\alpha$ also improves performance, up to a point, although the trends are less consistent. Recall that $\alpha$ influences the step length, particularly during the first few iterations.  Consequently, relatively large $\alpha$ may lead to aggressive steps; indeed, increasing $\alpha$ beyond $\alpha=0.1$ leads MAD to diverge on problem \eqref{eq:idf}.  Conversely, small values of $\alpha$ can lead to conservative steps and many iterations.  These general trends are reflected in the data.

\begin{table}[tbp]
\centering
\caption{Number of iterations required by MAD on problem \eqref{eq:idf} with accurate data, for different combinations of $q$ and $\alpha$.  The Hessian regularization parameter is fixed at $\beta=0.5$.  An ``$\infty$" denotes a run that did not converge in fewer than $K_{\max}$ iterations, and ``nan'' denotes a run that diverged.}
\label{tab:idf_alpha}
\begin{tabular}{l|rrrrr}
\multicolumn{1}{c}{} & \multicolumn{5}{c}{$q$} \\\cline{2-6}
\rule{0ex}{3ex} $\alpha$ & \multicolumn{1}{c}{5} & \multicolumn{1}{c}{10} & 
\multicolumn{1}{c}{15} & \multicolumn{1}{c}{20} & \multicolumn{1}{c}{25} \\\hline
\rule{0ex}{3ex}%
0.01 & 1773 & $\infty$ &  432 & 156  &  73  \\
0.05 &  600  &      562  &  254 & 231 & 263 \\
0.1   &  277  &       78   &    42 &  35  &   35 \\
1.0   & nan   &  nan      & nan  &  nan&  nan \\\hline
\end{tabular}
\end{table}

\ignore{
\begin{table}[tbp]
\centering
\caption{Number of preconditioner applications required by MAD on problem \eqref{eq:idf} with accurate data.}
\label{tab:idf_accurate}
\begin{tabular}{lrrrrr}
 & \multicolumn{5}{c}{$q$} \\\cline{2-6}
\rule{0ex}{3ex} $\alpha$ & \multicolumn{1}{c}{5} & \multicolumn{1}{c}{10} & 
\multicolumn{1}{c}{15} & \multicolumn{1}{c}{20} & \multicolumn{1}{c}{25} \\\hline
\rule{0ex}{3ex}%
0.01 & 92107 & $\infty$ & 22738 &  8261 &   3984 \\
0.05 & 30950 & 28784  & 13290 & 12059 & 13701 \\
0.1   & 14368 &  4038   &  2218  &  1844  &  1842  \\\hline
\end{tabular}
\end{table}
}

The results of varying $\beta$ and $q$ are listed in Table~\ref{tab:idf_beta}.  For all runs in the table, $\alpha$ was held fixed at $0.1$.  As before, we see improved performance as $q$ increases.  However, the result of increasing $\beta$ is not as expected.  Increasing the Hessian regularization should reduce the effective step size, increasing the number of iterations and improving robustness.  Instead, the method becomes unstable and diverges as $\beta$ is increased beyond a value of one.  For the values considered, $\beta = 0.5$ appears to be optimal, but it is not clear why.  Further investigation into the role of $\beta$ is necessary.

\begin{table}[tbp]
\centering
\caption{Number of iterations required by MAD on problem \eqref{eq:idf} with accurate data, for different combinations of $q$ and $\beta$.  The scaling parameter is fixed at $\alpha=0.1$.  The symbol ``nan'' denotes a run that diverged.}
\label{tab:idf_beta}
\begin{tabular}{l|rrrrr}
\multicolumn{1}{c}{} & \multicolumn{5}{c}{$q$} \\\cline{2-6}
\rule{0ex}{3ex} $\beta$ & \multicolumn{1}{c}{5} & \multicolumn{1}{c}{10} & 
\multicolumn{1}{c}{15} & \multicolumn{1}{c}{20} & \multicolumn{1}{c}{25} \\\hline
\rule{0ex}{3ex}%
0.0  & 224 & 571 & 111  & 45  & 50   \\
0.1  & 329 & 254 & 156  & 80 & 138 \\
0.5  & 277 & 78   & 42    & 35 & 35  \\
1.0  & 354 & 414 & 179 & 114 & 70 \\
5.0  & nan & nan & nan & nan & nan \\\hline
\end{tabular}
\end{table}

\subsubsection{Performance Using Accurate Data}

Figure~\ref{fig:IDF_accurate} shows the convergence histories of the MAD algorithm applied to the multidisciplinary design problem~\eqref{eq:idf}.  Specifically, the plots show the optimality and feasibility norms on the left side of \eqref{eq:criterion}, normalized by their initial values.  Histories are plotted for a range of $q$ values from $5$ to $25$, with fixed values of $\alpha=0.1$ and $\beta=0.5$.  The convergence history produced by the Newton-Krylov (NK) algorithm~\cite{Dener2017matrix} is included for comparison.  The abcissa is the computational cost normalized by the cost of the NK method.

The results in Figure~\ref{fig:IDF_accurate} are based on tightly converged state and adjoint residuals with relative tolerances of $10^{-10}$ and $10^{-6}$, respectively.  Consequently, the data in this case is sufficiently accurate that conventional optimization methods, like the NK algorithm, will not experience issues with globalization.  

The results show that, for this particular case with accurate data, the MAD algorithm is competitive with the NK algorithm, especially for values of $q$ larger than 15.  Indeed, for $q \geq 15$ the asymptotic convergence rate appears similar to that of the inexact-Newton method, \ie superlinear.

\begin{figure}[tbp]
  \subfigure[relative optimality \label{fig:opt_IDF_accurate}]{%
    \includegraphics[width=0.48\textwidth]{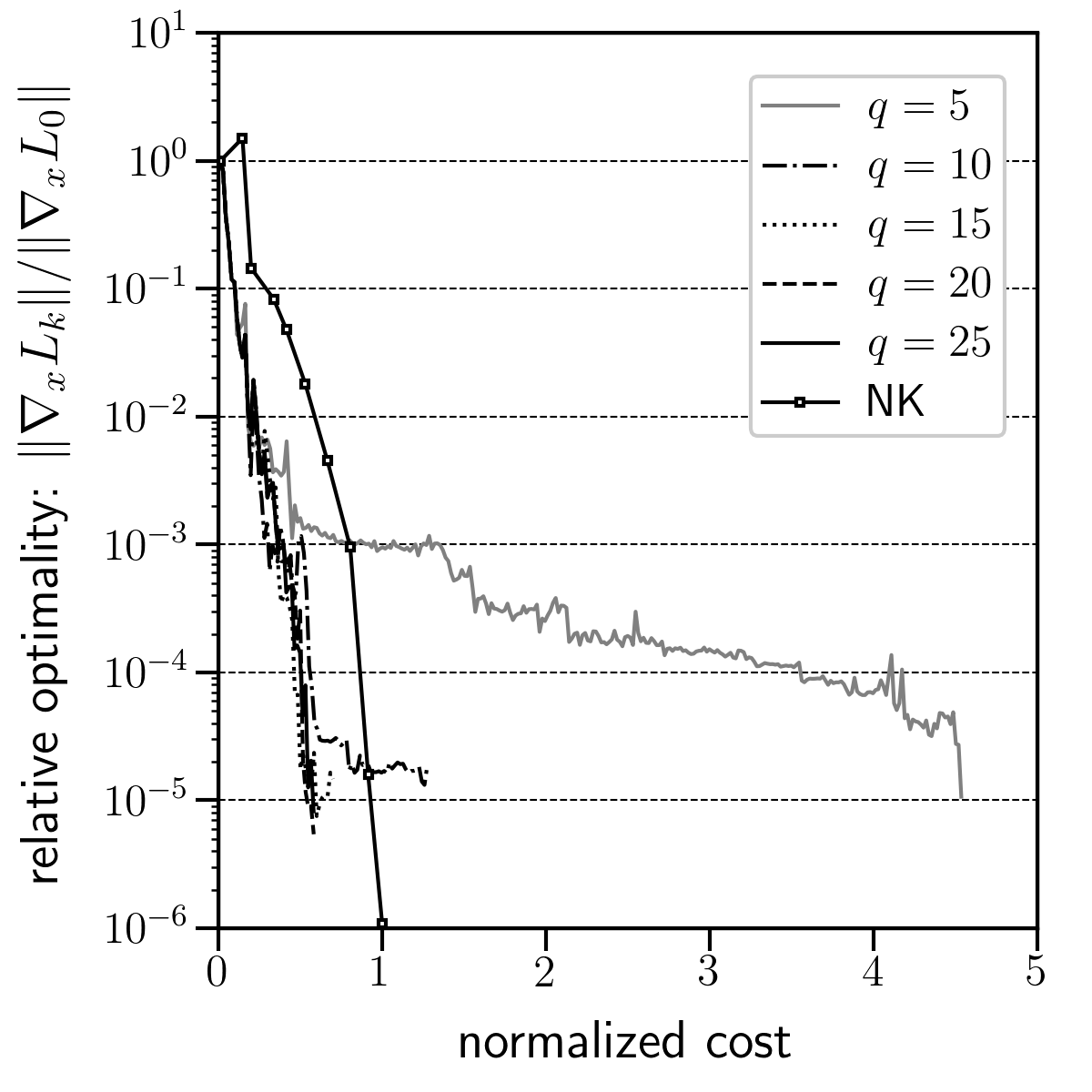}}
  \subfigure[relative feasibility \label{fig:feas_IDF_accurate}]{%
    \includegraphics[width=0.48\textwidth]{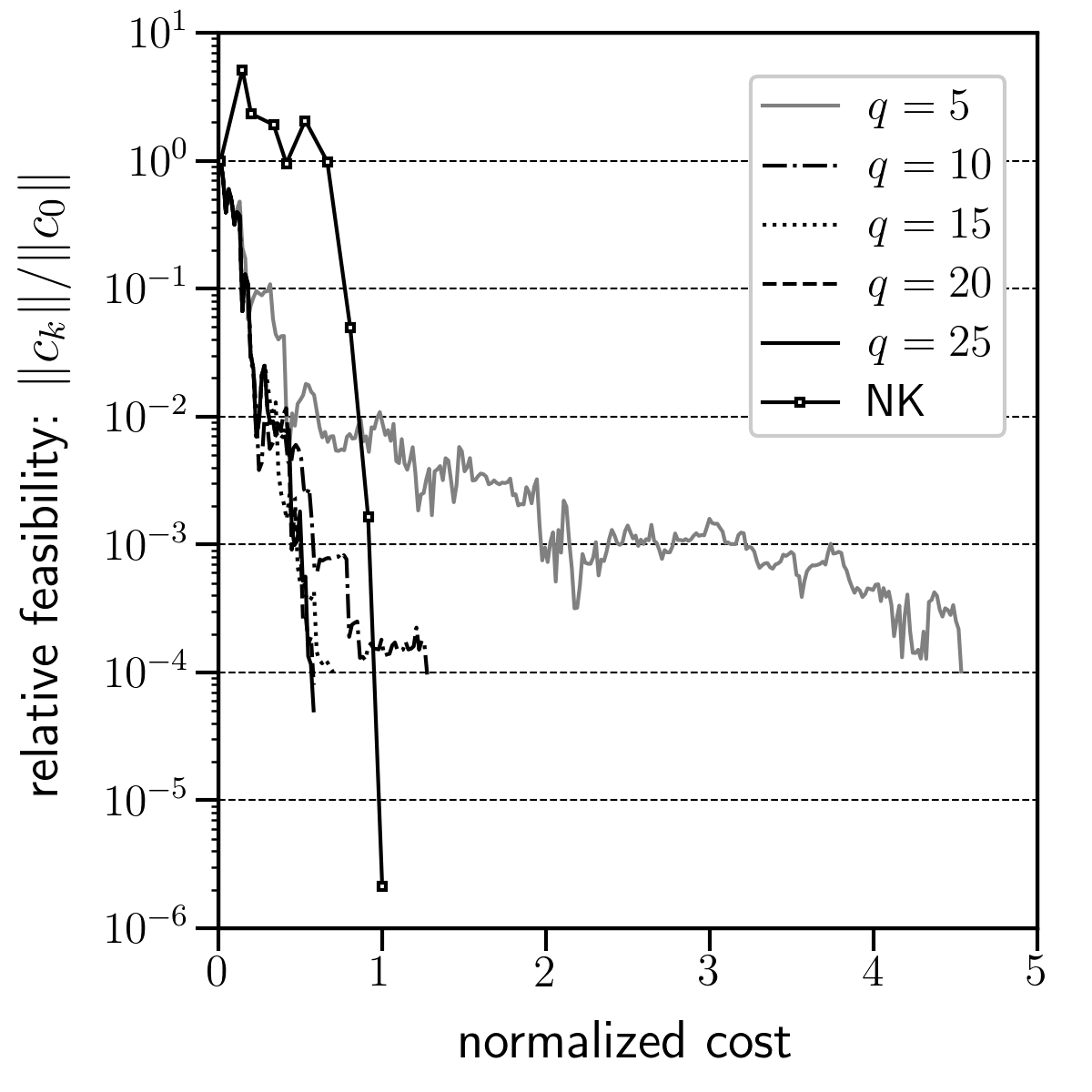}}
  \caption{Convergence histories of MAD, for a range of $q$ values, applied to \eqref{eq:idf} with accurate data.  All variants use $\alpha=0.1$ and $\beta=0.5$.  Results from a Newton-Krylov algorithm are included for reference, and the cost is normalized using the cost of the Newton-Krylov optimization. \label{fig:IDF_accurate}}
\end{figure}

\subsubsection{Performance Using Inaccurate Data}

Next, we repeat the experiments described above, but we loosen the tolerances on the state and adjoint residuals to $10^{-3}$ and $10^{-2}$, respectively.  The convergence histories in this case are plotted in Figure~\ref{fig:IDF_inaccurate}.

With the loose tolerances on the state and adjoint residuals, the objective, constraints, and gradients have sufficiently large errors that conventional globalization methods have difficulties.  This is reflected in the convergence history of the NK algorithm, which shows that the filter-based globalization stalls.  In contrast, the MAD algorithm proceeds without difficulty in the presence of the inaccurate data.

The cost in Figure~\ref{fig:IDF_inaccurate} is normalized by the computational cost of the NK algorithm using \emph{accurate data}.  Thus, it is possible to compare the accurate-data results in Figure~\ref{fig:IDF_accurate} with the inaccurate-data results in Figure~\ref{fig:IDF_inaccurate}.  Comparing the figures, we see that the MAD algorithm is significantly faster when using inaccurate data.  This is because the MAD runs with and without accurate data use approximately the same number of iterations (for this problem); consequently, the optimization cost is directly proportional to the cost of the state and adjoint solves.  Since the state and adjoint solves with loose tolerances are more than twice as fast as those with tight tolerances, this reduction in state/adjoint cost is translated directly into a reduction in MAD optimization cost.

\begin{figure}[tbp]
  \subfigure[relative optimality \label{fig:opt_IDF_inaccurate}]{%
    \includegraphics[width=0.48\textwidth]{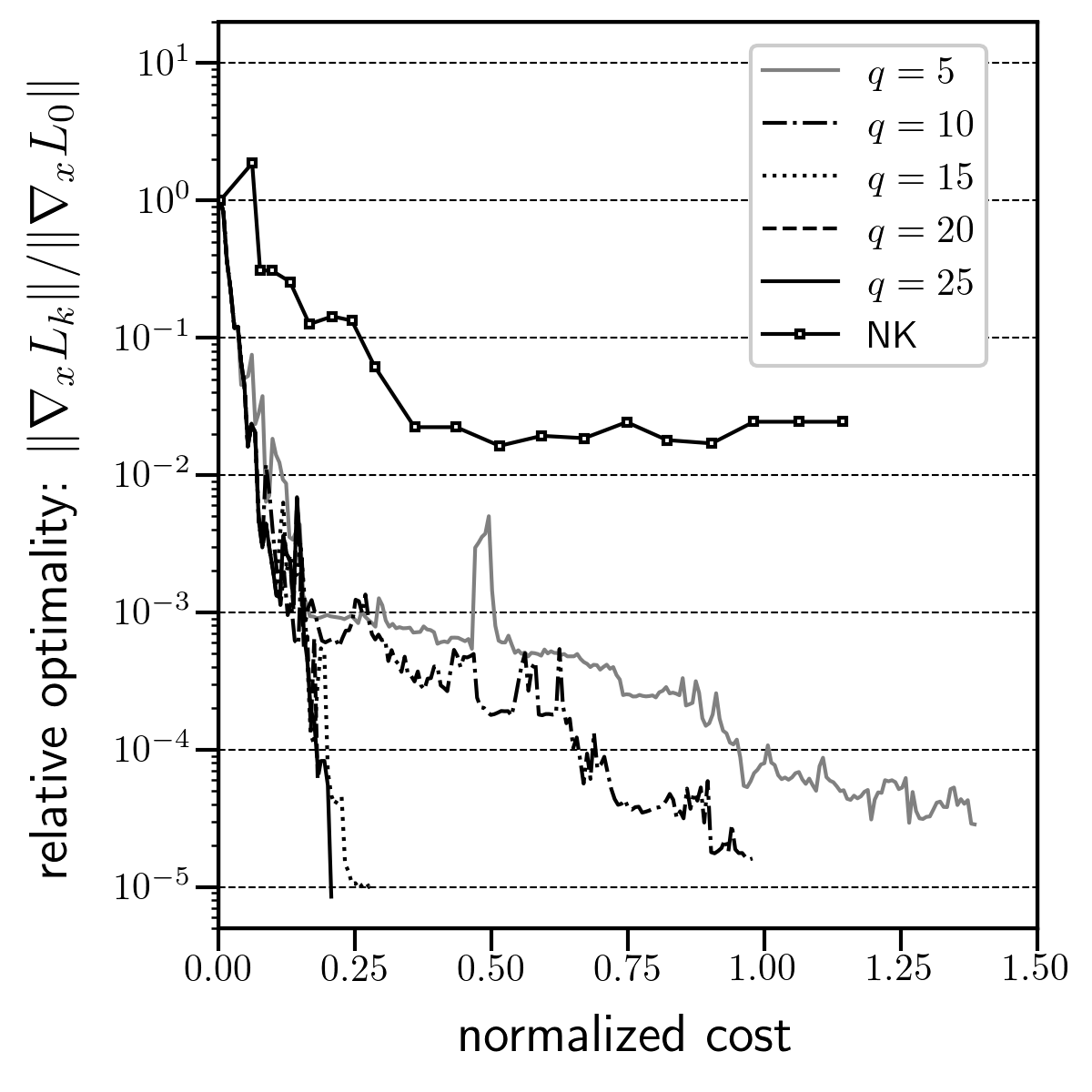}}
  \subfigure[relative feasibility \label{fig:feas_IDF_inaccurate}]{%
    \includegraphics[width=0.48\textwidth]{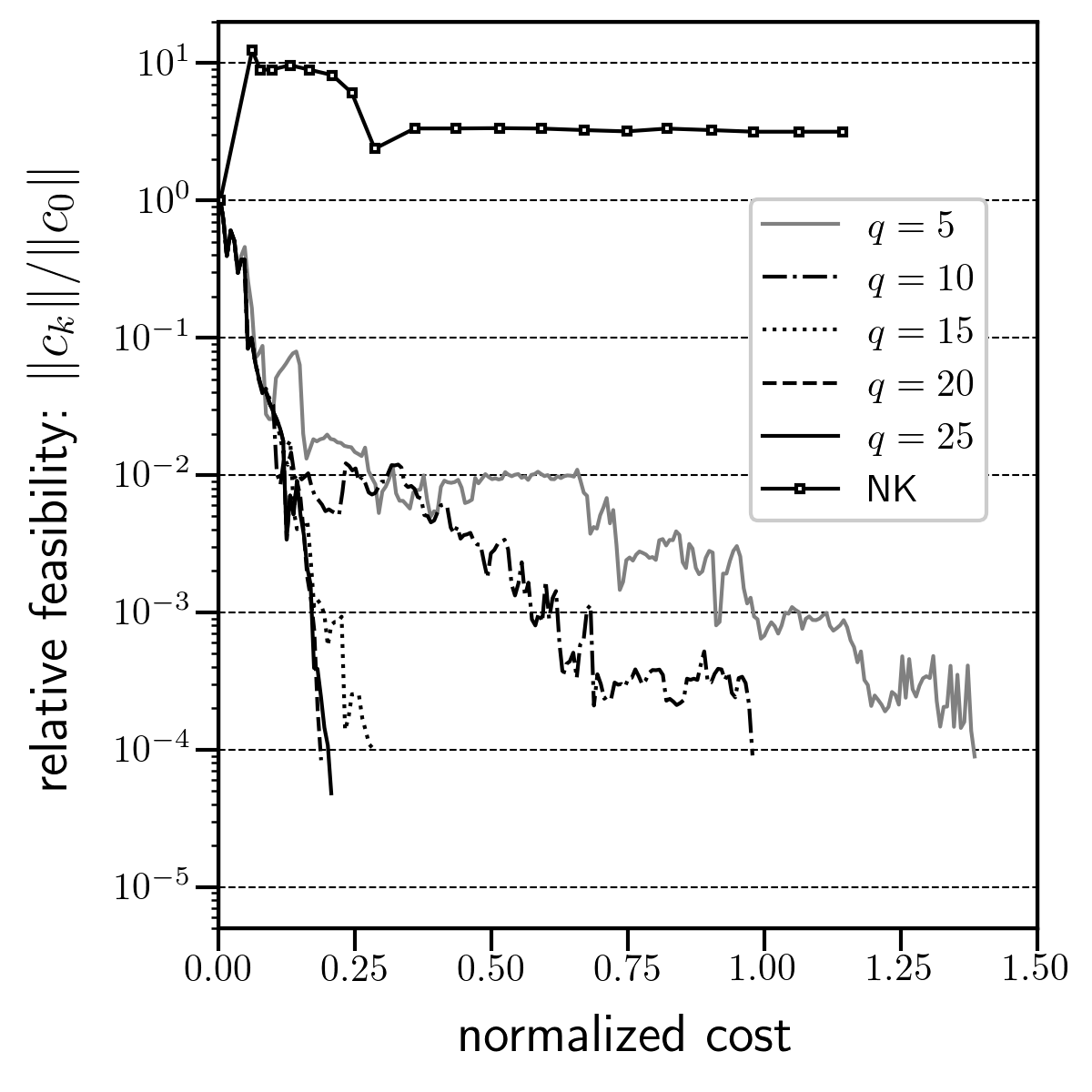}}
  \caption{Convergence histories of MAD, for a range of $q$ values, applied to \eqref{eq:idf} with inaccurate data.  All variants use $\alpha=0.1$ and $\beta=0.5$.  Results from a Newton-Krylov algorithm are included for reference, and the cost is normalized using the cost of the Newton-Krylov optimization \emph{with accurate data}. \label{fig:IDF_inaccurate}}
\end{figure}

\ignore{
\subsection{Optimization Under Uncertainty Problem}

Use the spar-mass optimization problem with stress constraints (inequality constraints!).  Use stochastic collocation for the accurate data, and use MC for inaccurate evaluation.

KS or Lorenz equation (unconstrained?)
- show effectiveness in the presence of noisy derivatives
}

\section{Summary and Future Work}\label{sec:conclude}

Many design optimization problems involve computationally expensive simulations, and while the objective and constraint derivatives may be available, they may be inaccurate or inconsistent. These errors in the data can cause conventional optimization algorithms to fail.  To enable the use of inaccurate/noisy data in optimization, we have proposed a error-tolerant optimization algorithm based on a multisecant quasi-Newton framework.  

The algorithm solves a set of semi-smooth nonlinear equations that are equivalent to the KKT first-order necessary conditions.  By recasting the KKT conditions as nonlinear equations, it is possible to apply multisecant methods directly to the entire set of equations, in contrast to conventional quasi-Newton optimization algorithms that apply the approximation to the Hessian of the Lagrangian only.  The algorithm was enhanced by incorporating preconditioning into the multisecant update equation, and a regularization was introduced to handle nonconvex problems. 

To demonstrate the method, it was applied to a multidisciplinary design optimization problem that has state-dependent constraints and over a hundred variables.  Using optimal parameter values, the multisecant method was found to be competitive with an inexact-Newton-Krylov algorithm when accurate data was provided.  When inaccurate data was used, the performance of the multisecant method was virtually unchanged, whereas the Newton-Krylov algorithm stalled.

The numerical experiments suggest that the proposed multisecant algorithm is a promising method for nonlinearly constrained optimization problems, both with and without errors in the data.  Nevertheless, some important issues for future research remain.  The parameters $\alpha$ and $\beta$ were found to have a significant impact on the success and performance of the algorithm, yet we do not have an automated means of selecting optimal values for these parameters.  Furthermore, the influence of $\beta$ on regularization is counter-intuitive and demands further investigation.  Finally, the algorithm benefits from preconditioning, but there are few effective, general-purpose preconditioners for constrained optimization problems.  

\section{Acknowledgements} 

J. Hicken was funded by the Air Force Office of Scientific Research Award FA9550-15-1-0242 under Dr. Jean-Luc Cambier. The authors gratefully acknowledge this support.

\bibliographystyle{spmpsci} 
\bibliography{./references}

\end{document}